\documentclass[12pt]{amsart}

\usepackage{import}
\usepackage{preamble/Pakete}
\usepackage{preamble/Befehle}
\begin{document}
\raggedbottom


\title[Condensed configurations and valuations]
{Condensed configurations and valuative matroid invariants}
\author{Jens Niklas Eberhardt}
\address{Institut f\"ur Mathematik, Johannes Gutenberg-Universit\"at Mainz,
  Staudingerweg 9, 55128 Mainz, Germany}
\email{mail@jenseberhardt.com}
\date{September 16, 2026}

\subjclass[2020]{05B35, 05B05, 52B40}

\begin{abstract}
Condensed configurations are compact incidence data obtained by grouping
the cyclic flats of a matroid.  We show that their inverse incidence matrices
give explicit Schubert expansions and hence determine every
valuative or covaluative matroid invariant.  For the
extended binary Golay matroid, this unexpectedly produces non-real-rooted
Kazhdan--Lusztig and \(Z\)-polynomials.  To our knowledge, the latter is
the first counterexample in the literature.
\end{abstract}

\maketitle


\section{Introduction}

The \emph{condensed configuration} of a matroid groups its cyclic flats
into blocks and records the common rank and cardinality of each block
together with the incidence numbers between
blocks~\cite{Eberhardt2014}.  For highly symmetric matroids, condensation by
automorphism orbits can be extremely small.
The extended binary Golay matroid has \(2\,047\,118\) flats and
\(39\,516\) cyclic flats, whereas its \(M_{24}\)-orbit condensation is
a labelled \(6\times6\) integer matrix.

We proved that this data determines the Tutte
polynomial~\cite[Theorem~5.1]{Eberhardt2014}.  Bonin and Kung proved that
the full configuration, that is, the lattice of all cyclic flats labelled
by rank and cardinality, determines the numbers of maximal flags of flats
with prescribed cardinalities and Derksen's
\(\Ginv\)-invariant~\cite[Theorem~7.3]{BoninKung2018}.  In this paper, we
prove the following (Theorem~\ref{thm:Schubert-expansion} and
Corollary~\ref{cor:covaluative-expansion}).

\begin{theorem*}
A condensed configuration of a finite matroid without loops or coloops
determines every valuative or covaluative matroid invariant.
\end{theorem*}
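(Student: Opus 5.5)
The plan is to go through the Derksen--Fink theorem. The valuative group of matroids of rank \(r\) on \([n]\), modulo the symmetric group, has a basis given by the classes of Schubert matroids. Equivalently, for a loopless and coloopless matroid there is the Hampe/Ferroni--Schröter expansion
\[
[M] \;=\; \sum_{\text{chains } \emptyset\subsetneq F_1\subsetneq\cdots\subsetneq F_k\subsetneq E \text{ of cyclic flats}} (-1)^{k}\,[\Omega(\text{chain})] ,
\]
where each term is a Schubert matroid. In the symmetrized group, that Schubert matroid depends only on the sequence of pairs \((\operatorname{rk}F_i,|F_i|)\), up to isomorphism. Any valuative invariant \(f\) factors through this group. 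Hence \(f(M)\) is determined once we know, for every labelled sequence \(((r_1,c_1),\dots,(r_k,c_k))\), the signed number of chains of cyclic flats realising it.

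The core step is therefore to count labelled chains of cyclic flats from the condensed configuration. Suppose the cyclic flats are partitioned into blocks \(B_1,\dots,B_m\), each with a common rank and cardinality. Suppose further that the incidence numbers \(a_{ij}=\#\{Z\in B_j: Z\supseteq Y\}\) are independent of the choice of \(Y\in B_i\). The number of chains \(Z_1\subsetneq\cdots\subsetneq Z_k\) with \(Z_t\in B_{i_t}\) is then \(|B_{i_1}|\,a_{i_1i_2}\cdots a_{i_{k-1}i_k}\), computed by iterated counting. Strict inclusion is automatic between distinct blocks, and within a block strict inclusion is impossible because cardinalities agree. Summing with signs over all chains between the bottom block (the closure of \(\emptyset\), namely \(\emptyset\) for loopless \(M\)) and the top block \(E\) gives a Möbius-type sum. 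Writing \(A\) for the block incidence matrix with its diagonal removed suitably, this sum is an entry of \((I+N)^{-1}=\sum_k(-N)^k\), with \(N\) nilpotent. That is exactly the ``inverse incidence matrix'' the abstract mentions. Each coefficient in the Schubert expansion becomes a sum over block sequences of entries of this inverse, weighted by block sizes, and is explicitly determined. The loop/coloop hypothesis is what makes \(\emptyset\) and \(E\) cyclic flats, so the chain expansion has the stated endpoints.

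For the covaluative statement I would apply matroid duality. The cyclic flats of \(M^*\) are the complements \(E\setminus Z\), with rank \(|E|-|Z|-r+\operatorname{rk}Z\), and the incidences are reversed. The condensed configuration of \(M^*\) is therefore the transpose with relabelled ranks. Covaluative invariants of \(M\) are valuative invariants of \(M^*\) (or can be handled via the dual Schubert basis), so they are covered as well.

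The main obstacle is the first step. One has to nail down the precise form of the Schubert expansion in terms of chains of cyclic flats, with correct signs and the identification of each term with a Schubert matroid depending only on ranks and sizes. If a direct citation is unavailable, I would derive it by Möbius inversion on the lattice of cyclic flats, starting from the fact that the indicator of \(P(M)\) is an inclusion–exclusion of the Schubert polytopes \(\{x:\ x(Z)\le\operatorname{rk}Z\}\) over chains. The counting and matrix-inversion steps are then routine.
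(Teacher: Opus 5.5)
Your overall plan of expanding over cyclic-flat chains and then grouping chains by block sequence is the paper's. However, the expansion you start from is false, and the error is more than a sign convention. In Hampe's cyclic-chain expansion (in Ferroni's valuative form), a chain \(\mathbf X=(\varnothing=X_0<\cdots<X_k=E)\) contributes \(\Sch_{\mathbf X}\) with coefficient \(-\mu_{\mathsf C(\Zcyc)}(\mathbf X,\widehat1)\). By Panzer's factorization this equals \(\prod_{i=1}^k\bigl(-\mu_{\Zcyc}(X_{i-1},X_i)\bigr)\), not a sign depending only on the length of the chain. Your formula already fails for a Schubert matroid \(M\) with cyclic flats \(\varnothing<F<E\), where it reads \([M]=[U_{r,|E|}]-[M]\); the correct coefficients are \(-\mu_{\Zcyc}(\varnothing,E)=0\) and \(1\). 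For the constant invariant \(1\), Philip Hall's theorem shows it would force \(\mu_{\Zcyc}(\varnothing,E)=-1\) for every \(M\).

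Each Panzer factor depends on the whole interval \([X_{i-1},X_i]\), which can meet many blocks. So the coefficient of a block sequence \(\mathbf B\) is not a signed count of chains of type \(\mathbf B\), and your count \(|B_{i_1}|a_{i_1i_2}\cdots\) does not compute it. Your claimed ``sum of entries of the inverse weighted by block sizes'' is never derived, and block sizes in fact do not appear.

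The missing idea is the condensed M\"obius lemma: for every \(Y\in C\), \(\sum_{X\in B,\,X\subseteq Y}\mu_{\Zcyc}(X,Y)=(A_{\Pcal}^{-1})_{BC}\). To prove it, check that \(A_{\Pcal}\) maps the vector of these block sums to the unit vector of \(C\). You then sum the Panzer products over chains of type \(\mathbf B\) one step at a time. Let \(F_i(Y)\) be the total weight of partial chains of type \(B_0,\ldots,B_i\) ending at \(Y\in B_i\). It satisfies \(F_i(Y)=\sum_{X\in B_{i-1},\,X\subseteq Y}F_{i-1}(X)\bigl(-\mu_{\Zcyc}(X,Y)\bigr)\), so by induction \(F_i(Y)=\prod_{j\le i}-(A_{\Pcal}^{-1})_{B_{j-1},B_j}\), independently of \(Y\). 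Your identity \((I+N)^{-1}=\sum_k(-N)^k\) is the Hall-theorem form of this lemma for the single pair \((\zero,\one)\); it has to be applied to every consecutive pair of blocks.

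Note also that a condensation in the statement only requires each \(Y\in C\) to contain the same number of members of \(B\). Your upward regularity does not follow from this, so the sums must run over the lower flat with the upper one fixed, as above.

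Your covaluative step also fails. Since \(P(M^*)=\mathbf 1-P(M)\) is an affine image, duality preserves subdivisions, relative interiors and dimensions. Hence \(N\mapsto\Psi(N^*)\) is again covaluative, not valuative. Moreover, the transpose of \(A_{\Pcal}\) need not be a condensation of \(M^*\), for the regularity reason just given. What turns a covaluation into a valuation is the Euler-map relation: \((-1)^{|E|-c(N)}\Psi(N)\) is valuative. After applying the valuative result to this, you still need two facts. First, every loopless and coloopless Schubert matroid is connected, so all Schubert terms carry the same sign \((-1)^{|E|-1}\). Second, \(c(M)\) is determined by the condensed configuration, for instance as the least exponent of \(x\) in the valuative invariant \(T_M(x,0)\). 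Together these give the extra factor \((-1)^{c(M)-1}\).
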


This includes counts of flats and flags,
the Tutte and Ehrhart polynomials, Derksen's \(\Ginv\)-invariant,
Speyer's \(g\)-polynomial, and the Kazhdan--Lusztig, \(Z\)-, and Chow
polynomials.

Let us explain the idea of the proof.  A matroid is called Schubert if
its cyclic flats form a chain.  A Schubert expansion expresses
the value of a valuative invariant on a matroid as an integer linear
combination of its values on Schubert matroids.  Derksen and Fink
established such expansions for matroid
polytopes~\cite[Theorem~4.2]{DerksenFink2010}.
The cyclic-flat expansions of Hampe and Ferroni~\cite{Hampe2017,Ferroni2023}
have coefficients which, by Panzer's factorization~\cite{Panzer2025},
are products of signed M\"obius values.  We show that the inverse condensed
incidence matrix records sums of these M\"obius values over condensation
blocks.  This allows us to sum the coefficients of chains with the same
block sequence one step at a time, obtaining an expansion computed
entirely from the condensed configuration.

The rank-\(3\) matroid associated with a Steiner \(2\)-design has a
\(3\times3\) condensed incidence matrix.  Our formula expresses every
valuative invariant of this matroid as an integer linear combination
of its values on just two Schubert matroids.

For the extended binary Golay matroid, a \(6\times6\) condensed incidence
matrix gives nine Schubert terms.  This allows us, for example, to
compute its \(P\)- and \(Z\)-polynomials efficiently.  Much to our
surprise, neither polynomial is real-rooted, contrary to the conjectures
in~\cite[Conjecture~3.2]{GPY2017}
and~\cite[Conjecture~5.1]{PXY2018}, respectively.  Independently,
Cheng and Liu prove the stronger statement
for \(P\)-polynomials that representable examples over every finite
field need not even be unimodal~\cite{ChengLiu2026}.  The Golay example
appears to be the first counterexample in the literature for \(Z\),
whose coefficients are nevertheless always unimodal by
\(\gamma\)-positivity~\cite[Theorem~1.8]{FerroniMatherneStevensVecchi2024}.


\section{Condensed configurations}

We assume familiarity with basic matroid theory.  Throughout, \(M\) is
a finite matroid without loops or coloops, with ground set \(E\), rank
\(r=r(M)\), and lattice of cyclic flats \(\Zcyc(M)\).

A flat \(X\) is \emph{cyclic} if \(M|X\) has no coloops.

\begin{definition}[Condensed configuration]
The \emph{configuration} of \(M\) is the lattice \(\Zcyc(M)\), labelled
by rank and cardinality.  A \emph{condensation} is a partition \(\Pcal\)
of \(\Zcyc(M)\) into blocks of constant rank and cardinality.
We require that, for any two blocks \(B,C\), every flat in \(C\)
contains the same number of flats from \(B\).  For \(Y\in C\), we
denote this common number by
\[
 A_{\Pcal}(B,C)
 =\#\{X\in B\mid X\subseteq Y\}.
\]
For \(X\in B\), write
\[
 s_B=|X|,\qquad \rho_B=\rk(X).
\]
The block labels \((s_B,\rho_B)\) and the numbers
\(A_{\Pcal}(B,C)\) are the \emph{condensed configuration}.
\end{definition}

The singleton partition gives the full configuration.  A subgroup of
\(\operatorname{Aut}(M)\) gives a smaller condensation by its orbits.
The six-block Golay condensation below is of this form.  When there are
few cyclic-flat orbits, the labelled orbit matrix can be far smaller than
both the cyclic-flat lattice and the full flat lattice.

Write \(B\leq C\) when \(A_{\Pcal}(B,C)>0\).  This is a partial order
on \(\Pcal\).  Let \(\zero=\{\varnothing\}\) and \(\one=\{E\}\).  Then
\(|B|=A_{\Pcal}(B,\one)\).


\section{Schubert coordinates from inverse incidence}

Let \(\mu_{\Zcyc}\) be the M\"obius function of the cyclic-flat lattice.
If \(A_{\Pcal}(B,C)>0\), then \(\rho_B\leq\rho_C\), with equality only
when \(B=C\).  Moreover, \(A_{\Pcal}(B,B)=1\).  Thus \(A_{\Pcal}\) is
upper unitriangular when the blocks are ordered by rank.  Its inverse is
exactly the block sum of \(\mu_{\Zcyc}\).

\begin{lemma}[Condensed M\"obius function]\label{lem:condensed-moebius}
For \(Y\in C\),
\[
 (A_{\Pcal}^{-1})_{BC}
 =
 \sum_{\substack{X\in B\\X\subseteq Y}}
 \mu_{\Zcyc}(X,Y).
\]
In particular, the sum on the right depends only on the block \(C\) of
\(Y\).
\end{lemma}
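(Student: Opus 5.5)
Define, for blocks \(B,C\) and a chosen \(Y\in C\), the quantity \(M_Y(B)=\sum_{X\in B,\,X\subseteq Y}\mu_{\Zcyc}(X,Y)\). The plan is to show directly that the matrix \(N\) with entries \(N_{BC}=M_Y(B)\) (for some fixed representative \(Y\in C\)) satisfies \(A_{\Pcal}N=I\). Since \(A_{\Pcal}\) is upper unitriangular in a rank ordering, it is invertible, and \(N=A_{\Pcal}^{-1}\) follows. Independence of the choice of \(Y\) then comes for free, because the inverse does not depend on that choice. In practice the argument is run for an arbitrary representative \(Y\in C\), so it shows that every choice produces the same matrix.

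The computation is a reindexing of the defining identity of the Möbius function. Fix blocks \(D,C\) and \(Y\in C\). Then
\[
 \sum_{B}A_{\Pcal}(D,B)\,M_Y(B)
 =\sum_{B}\sum_{\substack{X\in B\\X\subseteq Y}}\#\{W\in D\mid W\subseteq X\}\,\mu_{\Zcyc}(X,Y).
\]
Here we use that \(A_{\Pcal}(D,B)\) equals the number of \(W\in D\) with \(W\subseteq X\) for every \(X\in B\); this is exactly the defining property of a condensation. Since the blocks partition \(\Zcyc(M)\), summing over \(B\) and over \(X\in B\) amounts to summing over all cyclic flats \(X\subseteq Y\). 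Exchanging the order of summation gives
\[
 \sum_{\substack{W\in D\\W\subseteq Y}}\ \sum_{W\subseteq X\subseteq Y}\mu_{\Zcyc}(X,Y)
 =\sum_{\substack{W\in D\\W\subseteq Y}}\delta_{W,Y}.
\]
This uses the dual recursion \(\sum_{W\le X\le Y}\mu(X,Y)=\delta_{W,Y}\), with intervals taken in the lattice \(\Zcyc(M)\). Order there is inclusion, so ``\(X\subseteq Y\) among cyclic flats'' is the same as \(X\le Y\). The resulting sum equals \(1\) if \(Y\in D\), that is if \(D=C\), and \(0\) otherwise. Hence \(A_{\Pcal}N=I\).

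I expect no serious obstacle. The one point that needs care is that the incidence numbers \(A_{\Pcal}(D,B)\) enter through representatives \(X\) of \(B\) that vary inside the sum. The condensation axiom is precisely what lets us replace \(A_{\Pcal}(D,B)\) by the \(X\)-dependent count \(\#\{W\in D: W\subseteq X\}\), and this step should be stated explicitly. Finally, \(A_{\Pcal}\) is square and unitriangular, so a one-sided inverse is two-sided, and the ``in particular'' clause follows.
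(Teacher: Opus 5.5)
Your proposal is correct and follows essentially the same route as the paper. Both arguments fix \(Y\in C\) and use the condensation property to rewrite \(A_{\Pcal}(D,B)\) as the count of \(W\in D\) below each \(X\in B\). Both then exchange sums and apply \(\sum_{W\le X\le Y}\mu_{\Zcyc}(X,Y)=\delta_{W,Y}\), which shows that \(A_{\Pcal}\) sends the vector of block sums of \(\mu_{\Zcyc}(\cdot,Y)\) to the unit vector at \(C\).
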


\begin{proof}
Fix \(Y\in C\), and for every block \(D\) put
\[
 m_D(Y)=
 \sum_{\substack{Z\in D\\Z\subseteq Y}}
 \mu_{\Zcyc}(Z,Y).
\]
Write \(m(Y)=(m_D(Y))_{D\in\Pcal}\).
For every block \(B\), the condensation property gives
\[
\begin{aligned}
 \bigl(A_{\Pcal}m(Y)\bigr)_B
 &=
 \sum_{D\in\Pcal} A_{\Pcal}(B,D)m_D(Y)\\
 &=
 \sum_{\substack{X\in B\\X\subseteq Y}}
 \ \sum_{X\subseteq Z\subseteq Y}\mu_{\Zcyc}(Z,Y).
\end{aligned}
\]
By the defining identity for the M\"obius function,
\[
 \sum_{X\subseteq Z\subseteq Y}\mu_{\Zcyc}(Z,Y)
 =\begin{cases}1,&X=Y,\\0,&X\ne Y.\end{cases}
\]
Thus only \(X=Y\) can contribute to the outer sum.  Since \(Y\in C\),
this happens exactly when \(B=C\).  Hence
\(m_B(Y)=(A_{\Pcal}^{-1})_{BC}\), as required.
\end{proof}

A \emph{Schubert matroid} is precisely a matroid whose lattice of cyclic
flats is a chain~\cite[Definition~2.6]{FerroniSchroeter2024}.  Other
sources call these nested, shifted, or generalized Catalan
matroids~\cite{Hampe2017,Ferroni2023}.  A strict block chain
\[
 \mathbf B=(\zero=B_0<B_1<\cdots<B_k=\one)
\]
is realized by an actual cyclic-flat chain.  Indeed, one may start with
\(E\) and choose successively downwards, using
\(A_{\Pcal}(B_{i-1},B_i)>0\) at each step.  Its cardinality--rank labels
determine a Schubert matroid up to isomorphism.  Let
\(\Sch_{\mathbf B}\) be the unique Schubert matroid whose cyclic-flat
chain has labels \((s_{B_i},\rho_{B_i})\).  Different chains of actual
cyclic flats having the same block sequence give isomorphic Schubert
matroids.  When the labels are to be displayed we also write
\[
 \Sch_{\mathbf B}
 =\Sch\bigl((s_{B_0},\rho_{B_0}),\ldots,(s_{B_k},\rho_{B_k})\bigr).
\]

Informally, a valuative invariant satisfies inclusion--exclusion whenever
a matroid base polytope is subdivided into matroid base polytopes.
We call an isomorphism-invariant function on matroids a
\emph{valuative matroid invariant} if its restriction to every fixed
ground set is a valuation.  Covaluative matroid invariants are defined
similarly.

\begin{theorem}[Condensed Schubert expansion]\label{thm:Schubert-expansion}
Let \(\Phi\) be a valuative matroid invariant with values in an abelian
group.  Then
\begin{equation}\label{eq:Schubert-expansion}
 \Phi(M)=
 \sum_{\zero=B_0<\cdots<B_k=\one}
 \left(
 \prod_{i=1}^{k}
   -(A_{\Pcal}^{-1})_{B_{i-1},B_i}
 \right)
 \Phi(\Sch_{\mathbf B}).
\end{equation}
The condensed configuration therefore gives the Schubert coordinates
of \(M\) after isomorphic Schubert matroids are identified.
\end{theorem}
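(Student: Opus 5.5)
The plan is to start from the known cyclic-flat chain expansion of Hampe and Ferroni, in the form made explicit by Panzer's factorization. For a loopless, coloopless matroid \(M\) and any valuative invariant \(\Phi\), that expansion reads
\[
 \Phi(M)=\sum_{\varnothing=X_0\subsetneq X_1\subsetneq\cdots\subsetneq X_k=E}
 \left(\prod_{i=1}^{k}-\mu_{\Zcyc}(X_{i-1},X_i)\right)\Phi(\Sch_{X_\bullet}),
\]
where the sum runs over chains in \(\Zcyc(M)\) from \(\varnothing\) to \(E\). Here \(\Sch_{X_\bullet}\) is the Schubert matroid whose cyclic-flat chain carries the labels \((|X_i|,\rk X_i)\). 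I take this as the input, quoting \cite{Hampe2017,Ferroni2023,Panzer2025}. If needed, I would check the sign convention on the smallest case \(k=1\): the coefficient there is \(-\mu(\varnothing,E)\), summed against the chain expansions of the intervals. This can be verified against the Derksen--Fink form of the expansion. I would also confirm that Panzer's factorization gives exactly the product of the \(-\mu\) values along consecutive steps.

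Next, group the chains by block sequence. Each chain \(X_\bullet\) determines the sequence \(\mathbf B=(B_0,\ldots,B_k)\) with \(X_i\in B_i\). Since \(X_{i-1}\subsetneq X_i\) are distinct cyclic flats, we have \(A_\Pcal(B_{i-1},B_i)>0\) and \(B_{i-1}\ne B_i\), so \(\mathbf B\) is a strict block chain with \(B_0=\zero\) and \(B_k=\one\). The Schubert term depends only on the labels, so \(\Phi(\Sch_{X_\bullet})=\Phi(\Sch_{\mathbf B})\). The theorem therefore reduces to the identity
\[
 \sum_{X_\bullet\mapsto\mathbf B}\ \prod_{i=1}^{k}\mu_{\Zcyc}(X_{i-1},X_i)
 =\prod_{i=1}^{k}(A_\Pcal^{-1})_{B_{i-1},B_i}
\]
for every strict block chain \(\mathbf B\). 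A strict block chain that is not realized by any actual chain would contribute \(0\) on the left. Its right-hand side then also vanishes, because some factor \((A_\Pcal^{-1})_{B_{i-1}B_i}\) is an empty sum by Lemma~\ref{lem:condensed-moebius}. This case is in fact vacuous, by the realizability remark preceding the theorem.

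To prove this identity, sum over the flats one step at a time, starting from the bottom. I define partial sums
\[
 S_j(Y)=\sum\prod_{i=1}^{j}\mu_{\Zcyc}(X_{i-1},X_i)
\]
over chains \(\varnothing=X_0\subsetneq\cdots\subsetneq X_j=Y\) with \(X_i\in B_i\), for \(Y\in B_j\). I then claim by induction that \(S_j(Y)=\prod_{i\le j}(A_\Pcal^{-1})_{B_{i-1}B_i}\) for every \(Y\in B_j\), independently of the choice of \(Y\). The base case \(j=0\) is trivial. For the inductive step, write \(S_j(Y)=\sum_{X\in B_{j-1},X\subseteq Y}S_{j-1}(X)\mu_{\Zcyc}(X,Y)\). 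By the induction hypothesis \(S_{j-1}(X)\) is a constant that factors out. The remaining sum is \((A_\Pcal^{-1})_{B_{j-1}B_j}\) by Lemma~\ref{lem:condensed-moebius}, which is independent of \(Y\in B_j\). We may drop the strictness \(X\ne Y\), since the blocks differ. Taking \(j=k\) and \(Y=E\) gives the identity, and the signs \((-1)^k\) match on both sides.

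The main obstacle is not this bookkeeping, which is exactly what the lemma's block-independence is designed for. It is pinning down the precise form of the cyclic-flat expansion: that the coefficient of a chain is \(\prod(-\mu_{\Zcyc}(X_{i-1},X_i))\) over the whole cyclic-flat lattice, and that the result holds for arbitrary abelian-group-valued valuative invariants. I would handle this by citing the expansion of the indicator function of the base polytope in Schubert indicator functions modulo the valuative relations. Any valuation factors through that group, so the expansion holds for all such \(\Phi\). The final sentence of the theorem then follows because the coefficients depend only on the condensed data.
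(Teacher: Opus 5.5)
Your proposal is correct and follows the paper's proof essentially step for step. You take the Hampe--Ferroni cyclic-chain expansion, with Panzer's product form of the coefficients, as input; you group chains by block sequence; and you sum the weights bottom-up by induction on partial sums ending at a fixed flat, using the block-independence in Lemma~\ref{lem:condensed-moebius}. The only cosmetic difference is that you carry the factors \(\mu\) instead of \(-\mu\) and restore the sign \((-1)^k\) at the end.
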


\begin{proof}
Let \(\mathsf C(\Zcyc)\) be the poset of cyclic-flat chains containing
\(\varnothing\) and \(E\), ordered by inclusion and with an artificial
top \(\widehat1\).
For a chain
\(\mathbf X=(\varnothing=X_0<\cdots<X_k=E)\) of actual cyclic flats,
Ferroni's valuative form~\cite[Theorem~5.2]{Ferroni2023} of Hampe's
cyclic-chain expansion~\cite[Theorem~3.12]{Hampe2017} gives the coefficient
\[
 \lambda_{\mathbf X}
 =-\mu_{\mathsf C(\Zcyc)}(\mathbf X,\widehat1)
\]
for \(\Sch_{\mathbf X}\).
Panzer's factorization formula~\cite[Proposition~3.6]{Panzer2025} gives
\[
 \lambda_{\mathbf X}
 =
 \prod_{i=1}^{k}
   \bigl(-\mu_{\Zcyc}(X_{i-1},X_i)\bigr).
\]

We now collect the terms with a fixed block sequence \(\mathbf B\).
They all give matroids isomorphic to \(\Sch_{\mathbf B}\), so it remains
to sum their coefficients.  Put
\[
 c_i=-(A_{\Pcal}^{-1})_{B_{i-1},B_i}.
\]
Set \(F_0(\varnothing)=1\).  For \(Y\in B_i\), let \(F_i(Y)\) be the
total weight of chains with block sequence \(B_0,\ldots,B_i\) ending
at \(Y\).  We will show that this total is the same for every
\(Y\in B_i\).  Explicitly,
\[
 F_i(Y)=
 \sum_{\substack{X_0<\cdots<X_i=Y\\
                  X_j\in B_j\ (0\leq j\leq i)}}
 \prod_{j=1}^i\bigl(-\mu_{\Zcyc}(X_{j-1},X_j)\bigr).
\]
Separating a chain according to its penultimate flat gives
\[
 F_i(Y)=
 \sum_{\substack{X\in B_{i-1}\\X\subseteq Y}}
 F_{i-1}(X)\bigl(-\mu_{\Zcyc}(X,Y)\bigr).
\]
Lemma~\ref{lem:condensed-moebius} says that the sum of the M\"obius
factors on the right is \(c_i\), independently of \(Y\).
Thus, if \(F_{i-1}(X)=c_1\cdots c_{i-1}\) for all \(X\in B_{i-1}\),
then \(F_i(Y)=c_1\cdots c_i\) for all \(Y\in B_i\).  Since
\(F_0(\varnothing)=1\), induction gives
\[
 F_i(Y)=c_1\cdots c_i\qquad\text{for all }Y\in B_i.
\]
As \(B_k=\{E\}\), \(F_k(E)=c_1\cdots c_k\) is exactly the total
coefficient of the block sequence \(\mathbf B\).  Substitution into the
cyclic-chain expansion gives~\eqref{eq:Schubert-expansion}.
\end{proof}

\medskip
A \emph{covaluation} satisfies the subdivision relation without the
alternating dimension signs.  Equivalently, it is linear on indicator
functions of the relative interiors of matroid base
polytopes~\cite[Section~2.3]{Ferroni2023}.

\begin{corollary}[Covaluative Schubert expansion]
\label{cor:covaluative-expansion}
Let \(\Psi\) be a covaluative matroid invariant with values in an
abelian group, and let \(c(M)\) be the number of connected components
of \(M\).  Then
\[
 \Psi(M)=(-1)^{c(M)-1}
 \sum_{\zero=B_0<\cdots<B_k=\one}
 \left(
 \prod_{i=1}^{k}
   -(A_{\Pcal}^{-1})_{B_{i-1},B_i}
 \right)
 \Psi(\Sch_{\mathbf B}).
\]
In particular, a connected matroid has the same Schubert formula for
valuations and covaluations.
\end{corollary}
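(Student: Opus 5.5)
The plan is to reduce the corollary to Theorem~\ref{thm:Schubert-expansion} through the standard duality between valuations and covaluations on indicator functions of matroid base polytopes. Ferroni~\cite[Section~2.3]{Ferroni2023} shows that if \(\Psi\) is a covaluation, then
\[
 \Phi(N)=(-1)^{\dim P(N)}\,\Psi^{\circ}(N)
\]
defines a valuation, where \(\Psi^{\circ}\) is obtained from \(\Psi\) by the Euler--McMullen/Brianchon--Gram inversion between closed polytopes and relative interiors. Concretely, for \(N\) on ground set \(E\) with \(|E|=n\), we have \(\dim P(N)=n-c(N)\). The map sending \(\mathbf 1_{P}\) to \((-1)^{\dim P}\mathbf 1_{\operatorname{relint}P}\) is a linear involution on the span of indicator functions of matroid polytopes. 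We define \(\Phi(N)=(-1)^{n-c(N)}\Psi(N)\) and check that \(\Phi\) is a valuation. Since \(\Psi\) is linear on relative-interior indicators, \(\Phi\) is linear on the images \(\pm\mathbf 1_{P}\), so \(\Phi\) is a valuation. This step I would cite from Ferroni rather than reprove. It is essentially the definition given in the paper: the covaluative subdivision relation is the valuative one with the alternating dimension signs removed.

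Next I apply Theorem~\ref{thm:Schubert-expansion} to \(\Phi\) and obtain
\[
 (-1)^{n-c(M)}\Psi(M)=\sum_{\mathbf B}\Bigl(\prod_i -(A_{\Pcal}^{-1})_{B_{i-1},B_i}\Bigr)(-1)^{n-c(\Sch_{\mathbf B})}\Psi(\Sch_{\mathbf B}).
\]
It then remains to show that every Schubert matroid \(\Sch_{\mathbf B}\) occurring here is connected, so that \(c(\Sch_{\mathbf B})=1\). The factors \((-1)^n\) cancel, and \((-1)^{c(M)-1}\) remains, which gives the stated formula. The final sentence of the corollary then follows immediately by setting \(c(M)=1\).

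The main obstacle is the connectivity of \(\Sch_{\mathbf B}\). A loopless, coloopless matroid is connected if and only if no proper nonempty separator exists. The chain \(\zero=B_0<\cdots<B_k=\one\) starts at the empty set, which is cyclic because the matroid is loopless, and ends at \(E\), which is cyclic because it is coloopless. The rank inequalities along a strict chain are strict, since \(\rho_{B_{i-1}}<\rho_{B_i}\). For a Schubert matroid whose cyclic flats form the chain \(\varnothing=X_0\subsetneq\cdots\subsetneq X_k=E\), a separator \(S\) would have both \(S\) and \(E\setminus S\) as cyclic flats, and they would satisfy \(r(S)+r(E\setminus S)=r\). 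In a chain, two nonempty complementary sets cannot both appear, because one would have to contain the other. Hence there is no separator, and every such Schubert matroid is connected. This must be stated with the correct characterization that separators of a coloopless matroid are cyclic flats, which holds because a separator is a union of components, and each component is closed and cyclic in a matroid with no loops or coloops.
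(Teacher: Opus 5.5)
Your proof is correct and follows the paper's own argument. Ferroni's Euler-map relation makes \((-1)^{|E|-c(N)}\Psi(N)\) valuative; no auxiliary \(\Psi^{\circ}\) is needed, as your subsequent definition shows. Theorem~\ref{thm:Schubert-expansion} then applies, and the signs collapse to \((-1)^{c(M)-1}\) because every Schubert matroid in the sum is loopless, coloopless and hence connected. The paper only asserts this last point, while you justify it: a proper separator and its complement would be two incomparable cyclic flats, which a chain cannot contain.

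The paper also notes that \(c(M)\) is the least exponent of \(x\) in \(T_M(x,0)\), so the prefactor is itself determined by the condensed configuration. This is not needed for the displayed identity, but it is what makes the corollary yield the introduction's claim about covaluative invariants.
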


\begin{proof}
By the Euler-map relation~\cite[Corollary~5.3]{Ferroni2023},
\((-1)^{|E|-c(N)}\Psi(N)\) is valuative.  Apply
Theorem~\ref{thm:Schubert-expansion} and note that every loopless,
coloopless Schubert matroid in the sum is connected.  Finally, \(c(M)\)
is the least exponent of \(x\) in \(T_M(x,0)\), which is itself
determined by the condensed configuration.
\end{proof}

Known valuativeness results apply the theorem to the flag \(f\)-vector,
the Tutte polynomial and Derksen's \(\Ginv\)-invariant, the
base-polytope Ehrhart polynomial, the Kazhdan--Lusztig and
\(Z\)-polynomials, and the Chow polynomial
\(H_M(t)=\sum_i\dim\operatorname{CH}^i(M)t^i\)~\cite{DerksenFink2010,FerroniSchroeter2024}.
Corollary~\ref{cor:covaluative-expansion} applies to Speyer's
\(g\)-polynomial~\cite{Ferroni2023}.


\section{Schubert evaluations}\label{sec:evaluations}

The expansion reduces the computation of an invariant to its values on
Schubert matroids.  Let us recall how these can be obtained.

The Tutte polynomial can be computed using Kung's
recurrence~\cite[Lemma~4.7]{Kung2017}.  Explicit formulas are available for
Speyer's \(g\)-polynomial~\cite[Theorems~3.4 and~4.3]{Ferroni2023},
the Ehrhart polynomial~\cite[Theorem~1.1]{FanLi2024}, and the Chow
polynomial~\cite[Proposition~8.12]{FerroniSchroeter2024}.

For \(P\) and \(Z\), we can use their defining
recurrences~\cite{EPW2016,PXY2018}.  Schubert matroids are closed under
minors~\cite{Hampe2017}, so these computations remain within the
Schubert class.  Partition formulas are also known for the Catalan
subfamily~\cite{ChenLiYao2026}.


\section{Steiner \texorpdfstring{\(2\)}{2}-designs}

Let \(D\) be a Steiner system \(S(2,k,v)\), where \(2<k<v\); that is, a
design on \(v\) points whose blocks have \(k\) elements and in which
every pair of points lies in exactly one block.  Let \(M(D)\) be the
rank-\(3\) matroid whose rank-\(2\) flats are the blocks of \(D\).  Put
\[
 b=\frac{\binom v2}{\binom k2},
\]
the number of blocks of \(D\).
Recall that a rank-\(r\) matroid is \emph{paving} if every circuit has
cardinality at least \(r\).  The matroid \(M(D)\) is paving because every
pair of points is independent.  Every pair also belongs to a
three-element circuit, so \(M(D)\) is connected.  The \(b\) design
blocks are the only nontrivial cyclic flats and form one condensation
block \(B\), with labels
\((0,0),(k,2),(v,3)\).  Hence
\[
 A_{\Pcal}=
 \begin{pmatrix}1&1&1\\0&1&b\\0&0&1\end{pmatrix},
 \qquad
 A_{\Pcal}^{-1}=
 \begin{pmatrix}1&-1&b-1\\0&1&-b\\0&0&1\end{pmatrix}.
\]
There are only two block chains.  The chain \(\zero<\one\) gives
\(\Sch\bigl((0,0),(v,3)\bigr)=U_{3,v}\) with coefficient \(1-b\).  The
chain \(\zero<B<\one\) gives \(\Sch\bigl((0,0),(k,2),(v,3)\bigr)\) with
coefficient \(b\).
For every \(\Phi\) as in Theorem~\ref{thm:Schubert-expansion}, we have
\[
 \Phi(M(D))
 =b\,\Phi\bigl(\Sch((0,0),(k,2),(v,3))\bigr)-(b-1)\Phi(U_{3,v}).
\]
Corollary~\ref{cor:covaluative-expansion} gives the same identity for
the \(g\)-polynomial.  Evaluating these two Schubert matroids as in
Section~\ref{sec:evaluations} gives
\[
\begin{aligned}
 P_{M(D)}(t)&=1+(b-v)t,\\
 Z_{M(D)}(t)&=1+b(t+t^2)+t^3,\\
 H_{M(D)}(t)&=1+(b+1)t+t^2,
\end{aligned}
\]
and Ferroni's formula simplifies to
\[
\begin{aligned}
 g_{M(D)}(t)={}&
 \left[\binom{v-2}{2}-b\binom{k-1}{2}\right]t\\
 &+\left[(v-3)(v-4)-b(k-2)^2\right]t^2\\
 &+\left[\binom{v-4}{2}-b\binom{k-2}{2}\right]t^3.
\end{aligned}
\]
The four identities also follow from known paving-matroid formulas.
The \(P\)- and \(Z\)-formulas are special cases
of~\cite[Theorem~4.4]{FerroniNasrVecchi2023}.  The Chow and
\(g\)-formulas follow
from~\cite[Theorems~8.16 and~9.18]{FerroniSchroeter2024}.
Equivariant \(P\)-polynomials have also been computed explicitly for
several higher Steiner systems~\cite[Proposition~6.2]{KarnNasrProudfootVecchi2023}.

\begin{example}[The Janko--Tonchev design]
Janko and Tonchev constructed an \(S(2,7,175)\) with
\(b=725\) blocks~\cite{JankoTonchev1998}.  Its \(3\times3\) condensed
matrix gives, without using the individual blocks,
\[
\begin{aligned}
 P(t)&=1+550t,&
 Z(t)&=1+725t+725t^2+t^3,\\
 H(t)&=1+726t+t^2,&
 g(t)&=4003t+11287t^2+7285t^3.
\end{aligned}
\]
\end{example}


\section{The extended binary Golay matroid}

Let \(G_{24}\) be the rank-\(12\) column matroid of a generator matrix
for the extended binary Golay code.
The Mathieu group \(M_{24}\) has six orbits on its cyclic flats.  Label
the rows and columns of the condensed containment matrix by the common
cardinality and rank of the cyclic flats in each orbit.  Then
\[
 \bigl(A_{\Pcal}(B,C)\bigr)_{B,C}
 =
 \bordermatrix{
 &(0,0)&(8,7)&(12,10)&(12,11)&(16,11)&(24,12)\cr
 (0,0)&1&1&1&1&1&1\cr
 (8,7)&0&1&3&0&30&759\cr
 (12,10)&0&0&1&0&140&35420\cr
 (12,11)&0&0&0&1&0&2576\cr
 (16,11)&0&0&0&0&1&759\cr
 (24,12)&0&0&0&0&0&1\cr
 }.
\]
The top-column entry \(759\) in the second row is printed as \(75\)
in~\cite[Example~4(3)]{Eberhardt2014}.  It counts all \(759\) octads,
so the correct entry is \(759\).

Its inverse is
\[
 A_{\Pcal}^{-1}=
 \begin{pmatrix}
 1&-1&2&-1&-251&123003\\
 0&1&-3&0&390&-190509\\
 0&0&1&0&-140&70840\\
 0&0&0&1&0&-2576\\
 0&0&0&0&1&-759\\
 0&0&0&0&0&1
 \end{pmatrix}.
\]
Numbering the six condensation blocks \(0,\ldots,5\) in the displayed
order, we obtain the following nine nonzero Schubert coefficients.
\[
\begin{array}{c|r@{\qquad}c|r}
 \mathbf B&\lambda(\mathbf B)&\mathbf B&\lambda(\mathbf B)\\ \hline
 (0,5)&-123003 &(0,1,5)&190509\\
 (0,2,5)&141680 &(0,3,5)&2576\\
 (0,4,5)&190509 &(0,1,2,5)&-212520\\
 (0,1,4,5)&-296010 &(0,2,4,5)&-212520\\
 (0,1,2,4,5)&318780 &&
\end{array}
\]
For example, the chain \((0,1,4,5)\) has coefficient
\[
 \lambda(0,1,4,5)=1\cdot(-390)\cdot759=-296010,
\]
obtained by multiplying the negatives of the corresponding entries of
\(A_{\Pcal}^{-1}\).
The value of any valuative matroid invariant on \(G_{24}\) is therefore
a weighted sum of only these nine Schubert values.

For the Kazhdan--Lusztig, \(Z\)-, and Chow polynomials, this gives:
\[
\begin{aligned}
 P_{G_{24}}(t)={}&1+3311t+162656t^2+2419692t^3\\
                 &+6722716t^4+2923921t^5.
\end{aligned}
\]
\[
\begin{aligned}
 Z_{G_{24}}(t)={}&
 1+3335t+205436t^2+4285820t^3+31050690t^4\\
 &+95326605t^5+137174576t^6+95326605t^7\\
 &+31050690t^8+4285820t^9+205436t^{10}
   +3335t^{11}+t^{12}.
\end{aligned}
\]
\[
\begin{aligned}
 H_{G_{24}}(t)={}&1+2047093t+915803398t^2+36469915409t^3\\
 &+335015656738t^4+961738406039t^5+961738406039t^6\\
 &+335015656738t^7+36469915409t^8+915803398t^9\\
 &+2047093t^{10}+t^{11}.
\end{aligned}
\]

\begin{theorem}[Golay counterexample]
Neither \(P_{G_{24}}(t)\) nor \(Z_{G_{24}}(t)\) is real-rooted.
The extended binary Golay matroid is therefore a counterexample to
the real-rootedness conjectures for matroid Kazhdan--Lusztig polynomials
and \(Z\)-polynomials.
\end{theorem}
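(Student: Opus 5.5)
Both polynomials are listed explicitly above, with integer coefficients. The statement therefore reduces to a finite, exact computation: for each polynomial, certify that it has a non-real root. The second sentence of the theorem then follows at once. By Theorem~\ref{thm:Schubert-expansion}, together with the valuativeness of \(P\) and \(Z\) recalled after Corollary~\ref{cor:covaluative-expansion}, these polynomials are the true invariants of \(G_{24}\). They thus contradict~\cite[Conjecture~3.2]{GPY2017} and~\cite[Conjecture~5.1]{PXY2018}.

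I would first try the cheap criteria. All coefficients are positive, so any real root is negative. Real-rootedness would force Newton's inequalities
\[
a_i^2\ge a_{i-1}a_{i+1}\Bigl(1+\tfrac1i\Bigr)\Bigl(1+\tfrac1{n-i}\Bigr).
\]
For \(P_{G_{24}}\), a rough check suggests that all of these hold. This is why I expect the main obstacle to be that no elementary log-concavity obstruction is available, and a genuine root-counting certificate is needed.

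For \(P_{G_{24}}\), of degree \(5\), I would compute its Sturm sequence in exact rational arithmetic. Equivalently, I would compute the signed subresultant sequence, which stays in \(\mathbb Z[t]\). I would then count sign variations at \(t=-\infty\) and \(t=+\infty\). If the difference is less than \(5\), the polynomial has fewer than five distinct real roots. To turn this into a proof of non-real-rootedness, I also need that \(P\) is squarefree, i.e.\ that \(\gcd(P,P')=1\). This appears as a nonzero final term of the Sturm sequence, and it excludes repeated real roots. As a quick first test, I would compute the discriminant. Its sign is \((-1)^{s}\), where \(s\) is the number of complex-conjugate root pairs. So a negative discriminant already proves the claim; a positive one forces me back to the full Sturm count.

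For \(Z_{G_{24}}\), which is palindromic of degree \(12\), I would first reduce the degree. I write \(Z(t)=t^6Q(t+t^{-1})\) with \(Q\) of degree \(6\), computing \(Q\) through the Chebyshev-type expansion of \(t^j+t^{-j}\) in \(u=t+t^{-1}\). Every root of \(Z\) is negative, and \(t\mapsto t+t^{-1}\) maps \((-\infty,0)\) onto \((-\infty,-2]\). Hence \(Z\) is real-rooted exactly when all six roots of \(Q\) are real and lie in \((-\infty,-2]\). I would then apply a Sturm count to \(Q\) on \((-\infty,-2]\). A count below six, with the multiplicity issue handled as above, exhibits a non-real root of \(Z\). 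As an independent sanity check, I would locate a specific complex root pair numerically and certify it with interval arithmetic or Rouché's theorem on a small disc.
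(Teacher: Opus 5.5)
Your plan is correct and is essentially the paper's argument. The paper certifies a non-real root of \(P_{G_{24}}\) by \(\operatorname{Disc}(P_{G_{24}})<0\). For \(Z\) it uses exactly your reduction \(Z_{G_{24}}(t)=t^6h(t+t^{-1})\) followed by \(\operatorname{Disc}(h)<0\). Because both discriminants come out negative, your "quick first test" already settles each case, and the Sturm count of \(h\) on \((-\infty,-2]\) is not needed, although it is a sharper if-and-only-if criterion.
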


\begin{proof}
The displayed \(P\)-polynomial has
\[
 \operatorname{Disc}(P_{G_{24}})
 =
 -843\,715\,237\,941\,819\,667\,313\,354\,034\,040\,
 305\,367\,976\,443<0.
\]
For \(Z\), write
\[
 Z_{G_{24}}(t)=t^6h(t+t^{-1}),
\qquad
\begin{aligned}
 h(z)={}&z^6+3335z^5+205430z^4+4269145z^3\\
        &+30228955z^2+82485820z+75484066.
\end{aligned}
\]
An exact resultant calculation yields
\[
 \operatorname{Disc}(h)
 =
 -38\,099\,706\,872\,043\,872\,627\,476\,207\,222\,032\,620\,
 577\,668\,197\,052\,812\,043\,856<0.
\]
Thus \(P_{G_{24}}\) and \(h\) have non-real roots.  A non-real root
\(z\) of \(h\) lifts through \(t+t^{-1}=z\) to non-real roots of
\(Z_{G_{24}}\).  The two conclusions
contradict~\cite[Conjecture~3.2]{GPY2017}
and~\cite[Conjecture~5.1]{PXY2018}, respectively.
\end{proof}

\section*{Acknowledgments}

We thank Nicholas Proudfoot for helpful input and for making us aware
of the concurrent and independent work of Ronnie Cheng and Shurui Liu.

\printbibliography

\end{document}